\documentclass{article}

\usepackage{arxiv}

\usepackage[utf8]{inputenc} 
\usepackage[T1]{fontenc}    
\usepackage[hidelinks]{hyperref}
\usepackage{graphicx}
\usepackage{caption}
\usepackage{subcaption}
\usepackage{url}            
\usepackage{booktabs}       
\usepackage{amsfonts}       
\usepackage{nicefrac}       
\usepackage{microtype}      
\usepackage{lipsum}		
\usepackage[round]{natbib}
\usepackage{doi}
\usepackage{amsthm}
\usepackage{amsmath}
\usepackage{mathabx}
\usepackage{diagbox}
\usepackage{stackengine}
\usepackage{algorithm}
\usepackage{algpseudocode}
 \usepackage{setspace}
\let\Algorithm\algorithm
\renewcommand\algorithm[1][]{\Algorithm[#1]\setstretch{1.2}}

\usepackage{array}
\usepackage{eqparbox}

\newtheorem{thm}{Theorem}

\title{Moment Monotonicity of Weibull, Gamma and Log-normal Distributions}

\date{} 					

\author{ 
\href{https://orcid.org/0000-0003-2002-984X}{\includegraphics[scale=0.06]{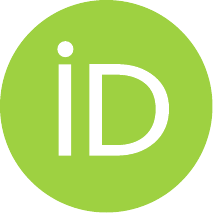}\hspace{1mm}Kang Liu}\thanks{Personal website: https://cruyffio.github.io/mysite/} \\
Independent Researcher\\
\texttt{liukangk11@gmail.com} \\
}




\hypersetup{
pdftitle={A template for the arxiv style},
pdfsubject={q-bio.NC, q-bio.QM},
pdfauthor={David S.~Hippocampus, Elias D.~Striatum},
pdfkeywords={First keyword, Second keyword, More},
}

\begin{document}
\maketitle

\vspace{-20pt}

\begin{abstract}

This paper investigates the moment monotonicity property of Weibull, Gamma, and Log-normal distributions. We provide the first complete mathematical proofs for the monotonicity of the function  $E(X^n)^{\frac{1}{n}}$ specific to these distributions. Through the derivations, we identify a key property: in many cases, one of the two parameters defining each distribution can be effectively canceled out. This finding opens up opportunities for improved parameter estimation of these random variables. Our results contribute to a deeper understanding of the behavior of these widely used distributions and offer valuable insights for applications in fields such as reliability engineering, econometrics, and machine learning.

\end{abstract}



\section{Introduction}
Weibull, Gamma, and Log-normal distributions are widely used in reliability engineering \citep{zio2009reliability,kapur2014reliability,xu2021machine}, econometrics \citep{imbens2009recent,dougherty2011introduction,anselin2022spatial,hansen2022econometrics}, medical statistics \citep{bland2015introduction,abiri2017tensile}, production system engineering \citep{alavian2018alpha,alavian2019alpha,alavian2020alpha,eun2022production,alavian2022alpha,liu2021alpha}, robotics \citep{cheng2011reliability,li2019imprecise,liu2019vehicle}, machine learning \citep{von2022knowledge}, and even natural language processing \citep{stanisz2024complex,liu2024setcse}. These distributions model a broad range of random variables, particularly those related to time-to-failure or time-between-events. Examples include the uptime of a machine in a production system \citep{li2008production} and the arrival time of a bus at a station \citep{yu2011bus}.

In probability theory \citep{renyi2007probability,laha2020probability}, the moments of a probability distribution provide quantitative measures of its shape. For instance, the first moment corresponds to the expected value, the second central moment to variance, the third standardized moment to skewness, and the fourth standardized moment to kurtosis. Suppose $X$ is a random variable following probability distribution function $f(x)$, its $n$-th moment is given by the following integral:
\begin{equation}
    E(X^n) = \int^{\infty}_{-\infty} x^n f(x) dx \text{ .}
\end{equation}


In this paper, we establish moment monotonicity theorems for Weibull, Gamma, and Log-normal distributions, demonstrating the monotonicity property of the function $E(X^n)^{\frac{1}{n}}$. To the best of our knowledge, this is the first paper to present complete mathematical proofs for these three distributions. Through these proofs, we observe that $E(X^n)^{\frac{1}{n}}$ exhibits specific properties that enable the derivation of its monotonicity. These results not only deepen our understanding of moment behavior in these distributions but also facilitate further research, such as the estimation of distribution parameters \citep{nielsen2011parameter,gomes2008parameter,ginos2009parameter}.

    
    

\section{Background}

It is important to note that a general form of the moment monotonicity property \citep{mcgill2022} can be proven using Jensen's inequality \citep{lin2010probability,boyd2004convex}. However, to the best of our knowledge, mathematical proofs for each specific probability density function are not yet available in the literature. These proofs are crucial for identifying particular patterns within these distributions, which can significantly benefit related areas of research, such as parameter estimation for these distributions.

\section{Weibull, Gamma and Log-normal Distribution and their Moments}

The probability density function of a Weibull random variable is given by the following formula:
\begin{equation}\label{eq:weibull_pdf}
f(x; k, \lambda)= \begin{cases}
\frac{k}{\lambda} \left(\frac{x}{\lambda}\right)^{k-1}  e^{-\left(\frac{x}{\lambda}\right)^k} &\text{ for } x\geq 0 \\
0 &\text{ for } x<0
\end{cases}
\; \text{ ,}
\end{equation}
where $k > 0$ is the shape parameter and $\lambda > 0$ is the scale parameter of the distribution. The probability density function is shown in Figure \ref{fig:weibull_pdf} for different values of $k$ and $\lambda$. In addition, the $i$-th moment of Weibull distribution is:
\begin{equation}
E(X^i) = \lambda^i \Gamma\left(1 + \frac{i}{k}\right) \text{ .}
\end{equation}

\begin{figure}
\centering

\begin{subfigure}{0.45\textwidth}
  \centering
  \includegraphics[width=0.8\linewidth]{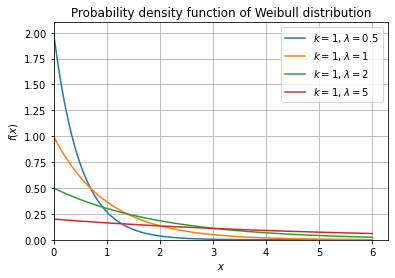}
  \caption{$k=1$}
  \label{fig:weibull_pdf1}
\end{subfigure}%
\begin{subfigure}{0.45\textwidth}
  \centering
  \includegraphics[width=0.8\linewidth]{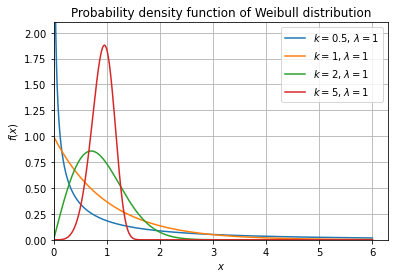}
  \caption{$\lambda=1$}
  \label{fig:weibull_pdf2}
\end{subfigure}

\caption{Probability density function of Weibull distribution.}
\label{fig:weibull_pdf}
\end{figure}

The probability density function of a Gamma random variable is given by the following formula:
\begin{equation}\label{eq:gamma_pdf}
f(x; k, \lambda)= \begin{cases}
\frac{1}{\beta^{\alpha} \Gamma(\alpha)} x^{\alpha-1} e^{\frac{-x}{\beta}} &\text{ for } x \geq 0
 \\
0 &\text{ for } x<0
\end{cases}
\; \text{ ,}
\end{equation}
where $\alpha > 0$ is the shape parameter and $\beta > 0$ is the scale parameter of the distribution. The probability density function is shown in Figure \ref{fig:gamma_pdf} for different values of $\sigma$ and $\mu$. In addition, the $i$-th moment of Gamma distribution is:
\begin{equation}\label{eq:moment_gamma}
E\left(X^i\right) = \frac{\beta^i \Gamma(i+\alpha)}{\Gamma(\alpha)} \text{ .}
\end{equation}

\begin{figure}
\centering

\begin{subfigure}{0.45\textwidth}
  \centering
  \includegraphics[width=0.8\linewidth]{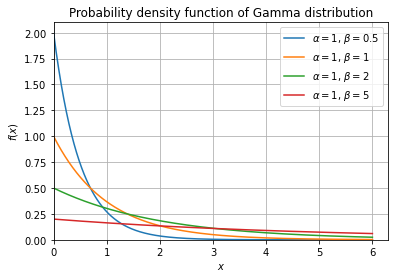}
  \caption{$\alpha=1$}
  \label{fig:gamma_pdf1}
\end{subfigure}%
\begin{subfigure}{0.45\textwidth}
  \centering
  \includegraphics[width=0.8\linewidth]{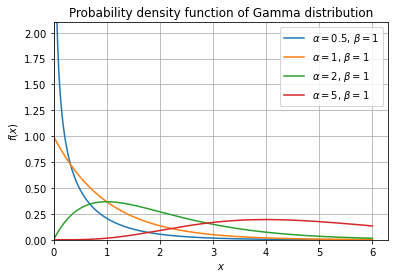}
  \caption{$\beta=1$}
  \label{fig:gamma_pdf2}
\end{subfigure}

\caption{Probability density function of Gamma distribution.}
\label{fig:gamma_pdf}
\end{figure}

The probability density function of a Log-normal random variable is given by the following formula:
\begin{equation}\label{eq:lognor_pdf}
f(x; k, \lambda)= \begin{cases}
\frac{1}{\sqrt{2\pi}\sigma x} e^{-\frac{(\ln{x} - \mu)^2}{2\sigma^2}} &\text{ for } x \geq 0
 \\
0 &\text{ for } x<0
\end{cases}
\; \text{ ,}
\end{equation}
where $\mu \in \mathbb{R}$ denotes the logarithm of location, and $\sigma > 0$ denotes logarithm of scale. The probability density function is shown in Figure \ref{fig:lognor_pdf} for different values of $\sigma$ and $\mu$. In addition, the $i$-th moment of Log-normal distribution is:
\begin{equation}
E\left(X^i\right) = e^{\mu i + \frac{1}{2} \sigma^2 i^2 } \text{ .}
\end{equation}

\begin{figure}
\centering

\begin{subfigure}{0.45\textwidth}
  \centering
  \includegraphics[width=0.8\linewidth]{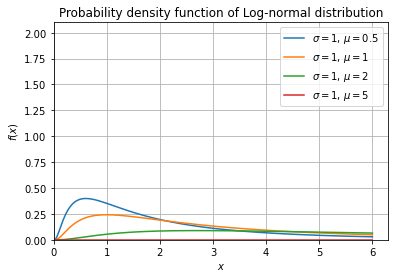}
  \caption{$\sigma=1$}
  \label{fig:lognor_pdf1}
\end{subfigure}%
\begin{subfigure}{0.45\textwidth}
  \centering
  \includegraphics[width=0.8\linewidth]{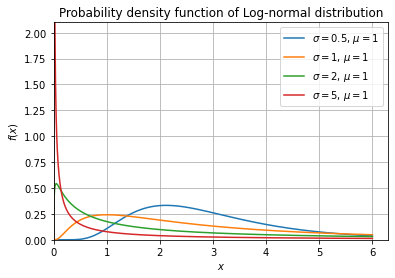}
  \caption{$\mu=1$}
  \label{fig:lognor_pdf2}
\end{subfigure}

\caption{Probability density function of Log-normal distribution.}
\label{fig:lognor_pdf}
\end{figure}

\section{Moment Monotonicity}
\label{sec:thm}

In this section, we state and prove the moment monotonicity property for each distribution of our interest.

\subsection{Moment Monotonicity of Weibull Distribution}

The following theorem states the moment monotonicity property of Weibull distribution.

\begin{thm}\label{thm:weibull}
Suppose random variable $X$ follows Weibull distribution, and $E(X^i)$ denotes the $i$-th moment of $X$. Then the random variable $X$ satisfy the following inequality: 
\begin{equation}\label{eq:moments}
    E(X^n)^{\frac{1}{n}} \geq E(X^m)^{\frac{1}{m}},
\end{equation}
where $n > m$.
\end{thm}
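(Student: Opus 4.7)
The plan is to substitute the closed-form moment $E(X^i) = \lambda^i \Gamma(1+i/k)$ into both sides of \eqref{eq:moments}, raising the inequality in question to log form: the desired statement becomes
\begin{equation*}
    \frac{1}{n}\ln\Gamma\!\left(1+\tfrac{n}{k}\right) \;\geq\; \frac{1}{m}\ln\Gamma\!\left(1+\tfrac{m}{k}\right).
\end{equation*}
The first thing to notice is that the scale parameter $\lambda$ cancels completely (this is the kind of ``one parameter drops out'' phenomenon the abstract highlights), so the monotonicity reduces to a statement about the Gamma function alone and ends up depending on $k$ only through the substitution $t \mapsto t/k$.

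Next I would introduce the auxiliary function $F(t) := \ln\Gamma(1+t/k)$ on $t\geq 0$ and reformulate the goal as: $F(t)/t$ is non-decreasing in $t>0$. The key observations are that $F(0) = \ln\Gamma(1) = 0$, and that $F$ is convex on $[0,\infty)$ because $\ln\Gamma$ is log-convex on $(0,\infty)$ (a classical fact, e.g.\ via Bohr–Mollerup, or directly from the Weierstrass product showing that the trigamma $\psi'(x) = \sum_{n\geq 0}(x+n)^{-2} > 0$), and convexity is preserved under the affine composition $t \mapsto 1 + t/k$. Given these two ingredients, the standard ``chord through the origin'' lemma applies: for any $0 < m < n$, writing $m = (m/n)\cdot n + (1 - m/n)\cdot 0$ and using convexity,
\begin{equation*}
    F(m) \;\leq\; \tfrac{m}{n} F(n) + \bigl(1-\tfrac{m}{n}\bigr) F(0) \;=\; \tfrac{m}{n} F(n),
\end{equation*}
which rearranges to exactly $F(m)/m \leq F(n)/n$, closing the argument.

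The only real obstacle is establishing the log-convexity of $\Gamma$ crisply; I would prefer the trigamma route, differentiating $F(t) = \ln\Gamma(1+t/k)$ twice to get $F''(t) = k^{-2}\psi'(1+t/k) > 0$, since this makes the proof self-contained without citing Bohr–Mollerup. An alternative route I would consider, if I wanted to avoid the digamma/trigamma machinery entirely, is to differentiate $\phi(t) = F(t)/t$ directly and show $t F'(t) - F(t) \geq 0$ by noting this quantity vanishes at $t=0$ and has derivative $tF''(t) \geq 0$; but this ultimately relies on the same convexity input. Either way, the argument is short once the $\lambda$-cancellation step is made explicit, and it extends verbatim to any real $n > m > 0$, not just integers.
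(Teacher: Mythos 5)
Your proof is correct, but it takes a genuinely different route from the paper's. Both arguments begin identically --- substituting $E(X^i) = \lambda^i\Gamma(1+\tfrac{i}{k})$ and observing that $\lambda$ cancels --- but then they diverge. The paper forms the ratio $R = E(X^n)^m/E(X^m)^n = \Gamma^m(1+\tfrac{n}{k})/\Gamma^n(1+\tfrac{m}{k})$, differentiates it with respect to the shape parameter $k$, uses the monotonicity of the digamma function $\psi$ (a consequence of $\psi'>0$) to show $\frac{dR}{dk}<0$, and then combines this with the limit $\lim_{k\to\infty}R=1$ to conclude $R\geq 1$. You instead work in the moment order: setting $F(t)=\ln\Gamma(1+\tfrac{t}{k})$, you use $F(0)=0$ together with the convexity $F''(t)=k^{-2}\psi'(1+\tfrac{t}{k})>0$ and the chord-through-the-origin lemma to obtain $F(m)/m\leq F(n)/n$ directly (and correctly: the argument does not require $F\geq 0$, only $m>0$). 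The two proofs rest on the same analytic input --- positivity of the trigamma --- but yours dispenses with the limit computation at $k\to\infty$, is shorter, and, as you note, extends verbatim to arbitrary reals $n>m>0$ rather than just integers; it is essentially the standard log-convexity proof of Lyapunov-type moment inequalities specialized to the Weibull closed form. The paper's $k$-monotonicity argument is more idiosyncratic, but both expose the same ``one parameter drops out'' structure emphasized in the abstract.
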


\begin{proof}
Suppose random variable $X$ follows Weibull distribution, where the scale parameter and shape parameter are denoted as $\lambda$ and $k$, respectively. By definition, the two parameters satisfy $\lambda\in(0,+\infty)$ and $k\in(0, +\infty)$.

The $i$-th moment of $X$ is given by:
\begin{equation}
    E(X^i) = \lambda^i \Gamma\left(1 + \frac{i}{k}\right),
\end{equation}
where
\begin{equation}
\Gamma(x) = \int^{\infty}_{0} t^{x-1}e^{-t}dt.
\end{equation}

Thus, for any positive integers $n>m$, we have $E(X^n) = \lambda^n \Gamma\left(1 + \frac{n}{k}\right)$ and $E(X^m) = \lambda^m \Gamma\left(1 + \frac{m}{k}\right)$. 

Let
\begin{equation}
    R = \frac{E(X^n)^m}{E(X^m)^n} = \frac{\lambda^{mn}\Gamma^m\left(1 + \frac{n}{k}\right)}{\lambda^{mn}\Gamma^n\left(1 + \frac{m}{k}\right)} = \frac{\Gamma^m\left(1+\frac{n}{k}\right)}{\Gamma^n\left(1+\frac{m}{k}\right)}.
\end{equation}

The derivative $\frac{dR}{dk}$ is given by:
\begin{equation}\label{eq:dRdk}
\begin{aligned}
\frac{dR}{dk} =& \frac{\Gamma^n(1+\frac{m}{k})mn(-k^{-2})\Gamma^{m-1}\left(1+\frac{n}{k}\right) \Gamma'\left(1+\frac{n}{k}\right) - \Gamma^m\left(1+\frac{n}{k}\right)mn(-k^{-2})\Gamma^{n-1}\left(1+\frac{m}{k}\right)\Gamma'\left(1+\frac{m}{k}\right)}{\Gamma^{2n}\left(1+\frac{m}{k}\right)}\\
=& \frac{mn \Gamma^{n-1}\left(1+\frac{m}{k}\right)\Gamma^{m-1}\left(1+\frac{n}{k}\right)\bigg(\Gamma\left(1+\frac{n}{k}\right)\Gamma'\left(1+\frac{m}{k}\right) - \Gamma\left(1+\frac{m}{k}\right)\Gamma'\left(1+\frac{n}{k}\right) \bigg)}{k^2\Gamma^{2n}\left(1+\frac{m}{k}\right)}.
\end{aligned}
\end{equation}

As one can see, the denominator of \eqref{eq:dRdk} is positive, to show \eqref{eq:dRdk} is negative, we consider what is referred to as digamma function, $\psi(x)$, and its derivative, $\psi'(x)$:
\begin{equation}
\begin{gathered}
\psi(x) = \frac{\Gamma'(x)}{\Gamma(x)},\\
\psi'(x) = -\int^{1}_{0} \frac{t^{x-1}}{1-t}\ln{t}dt > 0  \text{ for } x > 0.
\end{gathered}
\end{equation}

Thus, for $x>0$, $\psi(x)$ is monotonically increasing, i.e., for $n>m$, $\psi(1 + \frac{n}{\beta}) > \psi(1 + \frac{m}{\beta})$. In other words,

\begin{equation}
\begin{gathered}
\frac{\Gamma'\left(1+\frac{n}{k}\right)}{\Gamma\left(1+\frac{n}{k}\right)}> \frac{\Gamma'\left(1+\frac{m}{k}\right)}{\Gamma\left(1+\frac{m}{k}\right)}\\
\iff \Gamma\left(1+\frac{m}{k}\right)\Gamma'\left(1+\frac{n}{k}\right) > \Gamma\left(1+\frac{n}{k}\right)\Gamma'\left(1+\frac{m}{k}\right)\\
\iff \Gamma\left(1+\frac{n}{k}\right)\Gamma'\left(1+\frac{m}{k}\right) - \Gamma\left(1+\frac{m}{k}\right)\Gamma'\left(1+\frac{n}{k}\right) < 0 .
\end{gathered}
\end{equation}

Therefore, $\frac{dR}{dk}< 0$, i.e., $R$ is decreasing with respect to $k$. Since $k\in (0, +\infty)$, and $\lim\limits_{k\to \infty} R = \frac{\Gamma^m(1)}{\Gamma^n(1)} = 1$. Thus, we have $R \geq 1$, in other words,
\begin{equation}
    E(X^n)^{m} \geq E(X^m)^{n} \Longrightarrow E(X^n)^{\frac{1}{n}} \geq E(X^m)^{\frac{1}{m}}.
\end{equation}

\end{proof}

\subsection{Moment Monotonicity of Gamma Distribution}

The following theorem states the moment monotonicity property of Gamma distribution.

\begin{thm}\label{thm:gamma}
Suppose random variable $X$ follows Gamma distribution, and $E(X^i)$ denotes the $i$-th moment of $X$. Then the random variable $X$ satisfy the following inequality: 
\begin{equation}\label{eq:moments_gamma}
    E(X^n)^{\frac{1}{n}} \geq E(X^m)^{\frac{1}{m}},
\end{equation}
where $n > m$.
\end{thm}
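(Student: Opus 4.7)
The plan is to follow the same ratio-and-limit strategy used for the Weibull case, but now differentiate with respect to the shape parameter $\alpha$. Define
\begin{equation}
R(\alpha) = \frac{E(X^n)^m}{E(X^m)^n} = \frac{\beta^{mn}\Gamma^m(n+\alpha)/\Gamma^m(\alpha)}{\beta^{mn}\Gamma^n(m+\alpha)/\Gamma^n(\alpha)} = \frac{\Gamma^m(n+\alpha)\,\Gamma^{n-m}(\alpha)}{\Gamma^n(m+\alpha)}.
\end{equation}
Observe that, as in the Weibull case, the scale parameter $\beta$ cancels, so everything reduces to showing $R(\alpha)\geq 1$ for all $\alpha>0$ (this will imply $E(X^n)^{1/n}\geq E(X^m)^{1/m}$ by taking $(mn)$-th roots).

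Next I would work with $\ln R$ rather than $R$ itself, since that linearizes the product structure into digamma identities. Differentiating gives
\begin{equation}
\frac{d\ln R}{d\alpha} = m\,\psi(n+\alpha) + (n-m)\,\psi(\alpha) - n\,\psi(m+\alpha),
\end{equation}
where $\psi=\Gamma'/\Gamma$ is the digamma function. Now I would apply the recurrence $\psi(x+1)=\psi(x)+1/x$ repeatedly to write $\psi(n+\alpha)=\psi(\alpha)+\sum_{j=0}^{n-1}\frac{1}{\alpha+j}$ and similarly for $\psi(m+\alpha)$. The $\psi(\alpha)$ terms cancel cleanly, leaving
\begin{equation}
\frac{d\ln R}{d\alpha} = m\sum_{j=0}^{n-1}\frac{1}{\alpha+j} - n\sum_{j=0}^{m-1}\frac{1}{\alpha+j}.
\end{equation}

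The key step is showing this expression is negative whenever $n>m$. The hard part is not the calculus but the combinatorial inequality at the end: I would argue that the partial averages $\frac{1}{N}\sum_{j=0}^{N-1}\frac{1}{\alpha+j}$ are strictly decreasing in $N$, because adding a smaller term $\frac{1}{\alpha+N}$ to a sequence pulls the mean down. Equivalently, $\frac{1}{n}\sum_{j=0}^{n-1}\frac{1}{\alpha+j}<\frac{1}{m}\sum_{j=0}^{m-1}\frac{1}{\alpha+j}$, which upon multiplying through by $mn$ yields exactly $\frac{d\ln R}{d\alpha}<0$. Hence $R$ is strictly decreasing on $(0,\infty)$.

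Finally, to pin down the limit I would use the standard asymptotic $\Gamma(\alpha+a)/\Gamma(\alpha)\sim\alpha^{a}$ as $\alpha\to\infty$. Substituting into $R$ gives numerator $\sim(\alpha^n)^m=\alpha^{mn}$ and denominator $\sim(\alpha^m)^n=\alpha^{mn}$, so $\lim_{\alpha\to\infty}R(\alpha)=1$. Combined with monotonicity, this forces $R(\alpha)\geq 1$ on all of $(0,\infty)$, completing the proof. The main obstacle is the harmonic-sum inequality in step three; if Stirling feels too heavy for the limit, an alternative is to use the integral representation $\ln R=\int_{\alpha}^{\infty}\left(n\sum_{j=0}^{m-1}\frac{1}{t+j}-m\sum_{j=0}^{n-1}\frac{1}{t+j}\right)dt$ together with the tail estimate $\sum_{j=0}^{N-1}\frac{1}{t+j}=\ln N+O(1/t)$.
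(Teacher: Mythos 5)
Your proposal is correct and follows the same skeleton as the paper's proof: form $R=E(X^n)^m/E(X^m)^n$, observe that $\beta$ cancels, show $R$ is decreasing in $\alpha$, and show $R\to 1$ as $\alpha\to\infty$. The substantive difference is that you actually justify the two technical steps where the paper is incomplete. For the monotonicity, the paper writes down $m\psi(n+\alpha)-n\psi(m+\alpha)+(n-m)\psi(\alpha)$ (mislabelled as $\frac{dR}{dk}$ when it is really $\frac{d\ln R}{d\alpha}$) and simply asserts it is negative; your expansion $\psi(n+\alpha)=\psi(\alpha)+\sum_{j=0}^{n-1}\frac{1}{\alpha+j}$ cancels the $\psi(\alpha)$ terms and reduces the claim to the fact that the averages $\frac{1}{N}\sum_{j=0}^{N-1}\frac{1}{\alpha+j}$ strictly decrease in $N$ (each appended term lies below the running average). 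That harmonic-average inequality is precisely the lemma the paper omits, so your version closes a real gap. For the limit, the paper runs a lengthy Stirling computation, whereas your one-line use of $\Gamma(\alpha+a)/\Gamma(\alpha)\sim\alpha^{a}$ reaches the same conclusion. Two minor remarks: the recurrence for $\psi$ requires $n,m$ to be positive integers, which matches the paper's standing convention but should be stated explicitly; and for integer arguments one has the exact identity $R=\prod_{j=0}^{n-1}(\alpha+j)^{m}\big/\prod_{j=0}^{m-1}(\alpha+j)^{n}$, from which both the monotonicity and the limit can be read off with no special functions at all, should you want an entirely elementary route.
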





\begin{proof}
As given by \eqref{eq:moment_gamma}, the expression of $E(X^i)$ is
\begin{equation}
    E\left(X^i\right) = \frac{\beta^i \Gamma(i+\alpha)}{\Gamma(\alpha)} \text{ .}
\end{equation}
Let $R = \frac{E(X^n)^m}{E(X^m)^n}$, we have:
\begin{equation}
    R = \left( \frac{\beta^n \Gamma(n + \alpha)}{\Gamma(\alpha)}\right)^m \left( \frac{\beta^m \Gamma(m + \alpha)}{\Gamma(\alpha)}\right)^{-n} \text{.} 
\end{equation}

The derivative $\frac{dR}{dk}$ is given by:
\begin{equation}\label{eq:dRdk_gamma}
\begin{aligned}
\frac{dR}{dk} = m \psi(n + \alpha) - n \psi(m + \alpha) + (n - m) \psi(\alpha) < 0.
\end{aligned}
\end{equation}

Thus, $R$ is monotonically decreasing as $\alpha \rightarrow \infty$. To analyze the limit of $R$, we use Stirling’s approximation, which states that for large $\alpha$
\begin{equation}
    \Gamma(\alpha) \approx \sqrt{2\pi} \alpha^{\alpha - \frac{1}{2}} e^{-\alpha}.
\end{equation}

Applying this to each Gamma function:

\begin{equation}
\begin{aligned}
\Gamma(n + \alpha) \approx & \sqrt{2\pi} (\alpha+n)^{\alpha+n - \frac{1}{2}} e^{-(\alpha+n)}, \\
\Gamma(m + \alpha) \approx & \sqrt{2\pi} (\alpha+m)^{\alpha+m - \frac{1}{2}} e^{-(\alpha+m)}, \\
\Gamma(\alpha) \approx & \sqrt{2\pi} \alpha^{\alpha - \frac{1}{2}} e^{-\alpha}.
\end{aligned}
\end{equation}

Substituting these into the expression for $R$:

\begin{equation}
R \approx \frac{\left[ \sqrt{2\pi} (\alpha+n)^{\alpha+n-\frac{1}{2}} e^{-(\alpha+n)} \right]^m \cdot \left[ \sqrt{2\pi} \alpha^{\alpha-\frac{1}{2}} e^{-\alpha} \right]^{(n-m)}}
{\left[ \sqrt{2\pi} (\alpha+m)^{\alpha+m-\frac{1}{2}} e^{-(\alpha+m)} \right]^n}.
\end{equation}

Expanding the exponents:
\begin{equation}
R \approx \frac{(2\pi)^{\frac{m}{2}} (\alpha+n)^{m(\alpha+n-\frac{1}{2})} e^{-m(\alpha+n)} (2\pi)^{\frac{n-m}{2}} \alpha^{(n-m)(\alpha-\frac{1}{2})} e^{-(n-m)\alpha}}
{(2\pi)^{\frac{n}{2}} (\alpha+m)^{n(\alpha+m-\frac{1}{2})} e^{-n(\alpha+m)}}.
\end{equation}

Canceling $2\pi$ terms:

\begin{equation}
R \approx \frac{(\alpha+n)^{m(\alpha+n-\frac{1}{2})} e^{-m(\alpha+n)} \alpha^{(n-m)(\alpha-\frac{1}{2})} e^{-(n-m)\alpha}}
{(\alpha+m)^{n(\alpha+m-\frac{1}{2})} e^{-n(\alpha+m)}}.
\end{equation}

Rewriting exponents:
\begin{equation}
\begin{aligned}
& (\alpha+n)^{m(\alpha+n)} e^{-m(\alpha+n)} \alpha^{(n-m)\alpha} e^{-(n-m)\alpha}, \\
& (\alpha+m)^{n(\alpha+m)} e^{-n(\alpha+m)}.
\end{aligned}
\end{equation}

For large $\alpha$, using $(\alpha+a) \approx \alpha(1 + \frac{a}{\alpha})$, we approximate:
\begin{equation}
(\alpha+n)^{\alpha+n} \approx \alpha^{\alpha+n} e^n, \quad (\alpha+m)^{\alpha+m} \approx \alpha^{\alpha+m} e^m.
\end{equation}

Thus,
\begin{equation}
(\alpha+n)^{m(\alpha+n)} \approx \alpha^{m(\alpha+n)} e^{mn}, \quad (\alpha+m)^{n(\alpha+m)} \approx \alpha^{n(\alpha+m)} e^{nm}.
\end{equation}

Substituting:
\begin{equation}
R \approx \frac{\alpha^{m(\alpha+n)} e^{mn} \cdot \alpha^{(n-m)\alpha} e^{-(n-m)\alpha}}{\alpha^{n(\alpha+m)} e^{nm}}.
\end{equation}

Simplifying the exponent of $\alpha$:
\begin{equation}
m(\alpha+n) + (n-m)\alpha - n(\alpha+m) = m\alpha + mn + n\alpha - m\alpha - n\alpha - nm = 0.
\end{equation}

Thus, the powers of $\alpha$ cancel, and the exponentials cancel:

\begin{equation}
R \approx e^{mn - (n-m)\alpha - nm + n\alpha + nm} = e^0 = 1.
\end{equation}

Therefore, we have $R \geq 1$, in other words,
\begin{equation}
    E(X^n)^{m} \geq E(X^m)^{n} \Longrightarrow E(X^n)^{\frac{1}{n}} \geq E(X^m)^{\frac{1}{m}}.
\end{equation}

\end{proof}

\subsection{Moment Monotonicity of Log-normal Distribution}

The following theorem states the moment monotonicity property of Log-normal distribution.

\begin{thm}\label{thm:lognormal}
Suppose random variable $x$ follows Log-normal distribution, and $E(X^i)$ denotes the $i$-th moment of $X$. Then the random variable $X$ satisfy the following inequality: 
\begin{equation}\label{eq:moments_logn}
    E(X^n)^{\frac{1}{n}} \geq E(X^m)^{\frac{1}{m}},
\end{equation}
where $n > m$.
\end{thm}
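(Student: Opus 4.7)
The plan is to exploit the unusually clean closed-form moment expression $E(X^i) = e^{\mu i + \frac{1}{2}\sigma^2 i^2}$, which makes the quantity $E(X^i)^{1/i}$ collapse to an elementary exponential in $i$. First I would compute
\begin{equation}
E(X^n)^{1/n} = e^{\mu + \frac{1}{2}\sigma^2 n}, \qquad E(X^m)^{1/m} = e^{\mu + \frac{1}{2}\sigma^2 m},
\end{equation}
by simply taking the $n$-th and $m$-th roots of the moment formula. The key structural observation, which mirrors the ``parameter cancellation'' pattern the paper emphasizes for Weibull and Gamma, is that the location parameter $\mu$ now appears only linearly with coefficient $1$ in both exponents and will drop out of any comparison.

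Next I would take either the ratio or the difference of the exponents. Forming $R = E(X^n)^{1/n}/E(X^m)^{1/m} = e^{\frac{1}{2}\sigma^2 (n-m)}$ directly eliminates $\mu$, and $R \geq 1$ is then equivalent to $\frac{1}{2}\sigma^2(n-m) \geq 0$, which is immediate from $\sigma > 0$ and $n > m$. To keep the presentation parallel to the Weibull and Gamma proofs, I could alternatively define $R = E(X^n)^{m}/E(X^m)^{n}$ and show $R = e^{\frac{1}{2}\sigma^2 mn(n-m)} \geq 1$, making the analogy with the earlier subsections visually uniform.

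Unlike the Weibull and Gamma cases, there is no digamma inequality, no Stirling asymptotic, and no limit argument required here, because the moment generating structure of the Log-normal is a pure exponential of a quadratic in the order $i$ and the $1/i$ power linearizes it completely. Consequently there is no substantive obstacle in this proof; the only thing worth flagging explicitly is the parameter-cancellation phenomenon itself, namely that the monotonicity of $E(X^n)^{1/n}$ for Log-normal is governed entirely by $\sigma$ and is independent of $\mu$, paralleling the roles played by $k$ (with $\lambda$ cancelling) in the Weibull case and $\alpha$ (with $\beta$ cancelling) in the Gamma case.
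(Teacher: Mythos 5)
Your proposal is correct and matches the paper's proof essentially verbatim: both take the $1/i$-th root of $E(X^i)=e^{\mu i+\frac{1}{2}\sigma^2 i^2}$, form the ratio $E(X^n)^{1/n}/E(X^m)^{1/m}=e^{\frac{1}{2}\sigma^2(n-m)}$, and conclude from $\sigma>0$ and $n>m$ that the ratio is at least $1$. The observation that $\mu$ cancels is also exactly the point the paper emphasizes.
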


\begin{proof}
The $i$-th moment of X is given by
\begin{equation}
    E(X^i) = e^{\mu i + \frac{1}{2} \sigma^2 i^2},
\end{equation}
thus, we have
\begin{equation}
    E(X^i)^{\frac{1}{i}}  =  e^{\mu + \frac{1}{2} \sigma^2 i}.
\end{equation}

Denote $G = \frac{E(X^n)^{\frac{1}{n}}}{E(X^m)^{\frac{1}{m}}}$, we obtain
\begin{equation}
    G = \frac{e^{\mu + \frac{1}{2} \sigma^2 n}}{e^{\mu + \frac{1}{2} \sigma^2 m}} = e^{\frac{1}{2}\sigma^2 (n-m)}.
\end{equation}
Since $n>m \Rightarrow \frac{1}{2}\sigma^2(n-m)$, we have $G \geq 1$, in other words,
\begin{equation}
    E(X^n)^{\frac{1}{n}} \geq E(X^m)^{\frac{1}{m}}.
\end{equation}

\end{proof}

\section{Conclusion and Future Work}


In this paper, we have explored the moment monotonicity property of the Weibull, Gamma, and Log-normal distributions, providing the first complete mathematical proofs for the monotonicity of the function $E(X^n)^{\frac{1}{n}}$ specific to these distributions. Through the derivation of these proofs, we have uncovered a noteworthy phenomenon: in many cases, one of the two parameters that define each distribution can be canceled out. This insight is not only a theoretical contribution but also has practical implications for the estimation of distribution parameters. The ability to simplify these parameters could lead to more efficient methods for parameter estimation, particularly in cases where accurate modeling of random variables is crucial.


Looking forward, the potential for leveraging this property in the context of parameter estimation remains an area of significant interest, including its application in real-world scenarios and across different domains. By refining these methods, we aim to provide new tools for practitioners who rely on these distributions to model complex systems and processes.

\bibliographystyle{unsrtnat}
\bibliography{references}  

\begin{thebibliography}{31}
\providecommand{\natexlab}[1]{#1}
\providecommand{\url}[1]{\texttt{#1}}
\expandafter\ifx\csname urlstyle\endcsname\relax
  \providecommand{\doi}[1]{doi: #1}\else
  \providecommand{\doi}{doi: \begingroup \urlstyle{rm}\Url}\fi

\bibitem[Zio(2009)]{zio2009reliability}
Enrico Zio.
\newblock Reliability engineering: Old problems and new challenges.
\newblock \emph{Reliability engineering \& system safety}, 94\penalty0 (2):\penalty0 125--141, 2009.

\bibitem[Kapur and Pecht(2014)]{kapur2014reliability}
Kailash~C Kapur and Michael~G Pecht.
\newblock \emph{Reliability engineering}.
\newblock John Wiley \& Sons, 2014.

\bibitem[Xu and Saleh(2021)]{xu2021machine}
Zhaoyi Xu and Joseph~Homer Saleh.
\newblock Machine learning for reliability engineering and safety applications: Review of current status and future opportunities.
\newblock \emph{Reliability Engineering \& System Safety}, 211:\penalty0 107530, 2021.

\bibitem[Imbens and Wooldridge(2009)]{imbens2009recent}
Guido~W Imbens and Jeffrey~M Wooldridge.
\newblock Recent developments in the econometrics of program evaluation.
\newblock \emph{Journal of economic literature}, 47\penalty0 (1):\penalty0 5--86, 2009.

\bibitem[Dougherty(2011)]{dougherty2011introduction}
Christopher Dougherty.
\newblock \emph{Introduction to econometrics}.
\newblock Oxford university press, USA, 2011.

\bibitem[Anselin(2022)]{anselin2022spatial}
Luc Anselin.
\newblock Spatial econometrics.
\newblock \emph{Handbook of spatial analysis in the social sciences}, pages 101--122, 2022.

\bibitem[Hansen(2022)]{hansen2022econometrics}
Bruce Hansen.
\newblock \emph{Econometrics}.
\newblock Princeton University Press, 2022.

\bibitem[Bland(2015)]{bland2015introduction}
Martin Bland.
\newblock \emph{An introduction to medical statistics}.
\newblock Oxford university press, 2015.

\bibitem[Abiri et~al.(2017)Abiri, Paydar, Tao, LaRocca, Liu, Genovese, Candler, Grundfest, and Dutson]{abiri2017tensile}
Ahmad Abiri, Omeed Paydar, Anna Tao, Megan LaRocca, Kang Liu, Bradley Genovese, Robert Candler, Warren~S Grundfest, and Erik~P Dutson.
\newblock Tensile strength and failure load of sutures for robotic surgery.
\newblock \emph{Surgical endoscopy}, 31:\penalty0 3258--3270, 2017.

\bibitem[Alavian et~al.(2018)Alavian, Eun, Liu, Meerkov, and Zhang]{alavian2018alpha}
Pooya Alavian, Yongsoon Eun, Kang Liu, Semyon~M Meerkov, and Liang Zhang.
\newblock The ($\alpha$, $\beta$)-precise estimates of mtbf and mttr: Definitions, calculations, and effect on machine efficiency and throughput evaluation in serial production lines.
\newblock \emph{URL: http://web. eecs. umich. edu/\~{} smm/publications/mtbf\_mttr\_estimates. pdf}, 2018.

\bibitem[Alavian et~al.(2019)Alavian, Eun, Liu, Meerkov, and Zhang]{alavian2019alpha}
Pooya Alavian, Yongsoon Eun, Kang Liu, Semyon~M Meerkov, and Liang Zhang.
\newblock The ($\alpha$, $\beta$)-precise estimates of mtbf and mttr: Definitions, calculations, and induced effect on machine efficiency evaluation.
\newblock \emph{IFAC-PapersOnLine}, 52\penalty0 (13):\penalty0 1004--1009, 2019.

\bibitem[Alavian et~al.(2020)Alavian, Eun, Liu, Meerkov, and Zhang]{alavian2020alpha}
Pooya Alavian, Yongsoon Eun, Kang Liu, Semyon~M Meerkov, and Liang Zhang.
\newblock The ($\alpha$, $\beta$)-precise estimates of mtbf and mttr: Definition, calculation, and observation time.
\newblock \emph{IEEE Transactions on Automation Science and Engineering}, 18\penalty0 (3):\penalty0 1469--1477, 2020.

\bibitem[Eun et~al.(2022)Eun, Liu, and Meerkov]{eun2022production}
Yongsoon Eun, Kang Liu, and Semyon~M Meerkov.
\newblock Production systems with cycle overrun: modelling, analysis, improvability and bottlenecks.
\newblock \emph{International Journal of Production Research}, 60\penalty0 (2):\penalty0 534--548, 2022.

\bibitem[Alavian et~al.(2022)Alavian, Eun, Liu, Meerkov, and Zhang]{alavian2022alpha}
Pooya Alavian, Yongsoon Eun, Kang Liu, Semyon~M Meerkov, and Liang Zhang.
\newblock The ($\alpha$ x, $\beta$ x)-precise estimates of production systems performance metrics.
\newblock \emph{International Journal of Production Research}, 60\penalty0 (7):\penalty0 2230--2253, 2022.

\bibitem[Liu(2021)]{liu2021alpha}
K~Liu.
\newblock \emph{The ($\alpha$, $\beta$)-precision theory for production system monitoring and improvement}.
\newblock PhD thesis, Ph. D. thesis, The University of Michigan, 2021.

\bibitem[Cheng and Dhillon(2011)]{cheng2011reliability}
Shen Cheng and BS~Dhillon.
\newblock Reliability and availability analysis of a robot-safety system.
\newblock \emph{Journal of Quality in Maintenance Engineering}, 17\penalty0 (2):\penalty0 203--232, 2011.

\bibitem[Li et~al.(2019)Li, Teng, Wang, and Huang]{li2019imprecise}
Changhai Li, Yunlong Teng, Zhonglai Wang, and Qi~Huang.
\newblock Imprecise reliability analysis for the robotic component based on limited lifetime data.
\newblock \emph{IEEE Access}, 7:\penalty0 163877--163886, 2019.

\bibitem[Liu et~al.(2019)Liu, Li, Kolmanovsky, and Girard]{liu2019vehicle}
Kang Liu, Nan Li, Ilya Kolmanovsky, and Anouck Girard.
\newblock A vehicle routing problem with dynamic demands and restricted failures solved using stochastic predictive control.
\newblock In \emph{2019 American Control Conference (ACC)}, pages 1885--1890. IEEE, 2019.

\bibitem[von Hahn and Mechefske(2022)]{von2022knowledge}
Tim von Hahn and Chris~K Mechefske.
\newblock Knowledge informed machine learning using a weibull-based loss function.
\newblock \emph{arXiv preprint arXiv:2201.01769}, 2022.

\bibitem[Stanisz et~al.(2024)Stanisz, Dro{\.z}d{\.z}, and Kwapie{\'n}]{stanisz2024complex}
Tomasz Stanisz, Stanis{\l}aw Dro{\.z}d{\.z}, and Jaros{\l}aw Kwapie{\'n}.
\newblock Complex systems approach to natural language.
\newblock \emph{Physics Reports}, 1053:\penalty0 1--84, 2024.

\bibitem[Liu(2024)]{liu2024setcse}
Kang Liu.
\newblock Setcse: Set operations using contrastive learning of sentence embeddings.
\newblock \emph{arXiv preprint arXiv:2404.17606}, 2024.

\bibitem[Li(2008)]{li2008production}
J~Li.
\newblock \emph{Production Systems Engineering}.
\newblock Springer, 2008.

\bibitem[Yu et~al.(2011)Yu, Lam, and Tam]{yu2011bus}
Bin Yu, William~HK Lam, and Mei~Lam Tam.
\newblock Bus arrival time prediction at bus stop with multiple routes.
\newblock \emph{Transportation Research Part C: Emerging Technologies}, 19\penalty0 (6):\penalty0 1157--1170, 2011.

\bibitem[R{\'e}nyi(2007)]{renyi2007probability}
Alfr{\'e}d R{\'e}nyi.
\newblock \emph{Probability theory}.
\newblock Courier Corporation, 2007.

\bibitem[Laha and Rohatgi(2020)]{laha2020probability}
Radha~G Laha and Vijay~K Rohatgi.
\newblock \emph{Probability theory}.
\newblock Courier Dover Publications, 2020.

\bibitem[Nielsen(2011)]{nielsen2011parameter}
Mark~A Nielsen.
\newblock \emph{Parameter estimation for the two-parameter Weibull distribution}.
\newblock Brigham Young University, 2011.

\bibitem[Gom{\`e}s et~al.(2008)Gom{\`e}s, Combes, and Dussauchoy]{gomes2008parameter}
Oph{\'e}lie Gom{\`e}s, Catherine Combes, and Alain Dussauchoy.
\newblock Parameter estimation of the generalized gamma distribution.
\newblock \emph{Mathematics and Computers in Simulation}, 79\penalty0 (4):\penalty0 955--963, 2008.

\bibitem[Ginos(2009)]{ginos2009parameter}
Brenda~Faith Ginos.
\newblock \emph{Parameter estimation for the lognormal distribution}.
\newblock Brigham Young University, 2009.

\bibitem[Dufour(2022)]{mcgill2022}
Jean-Marie Dufour.
\newblock Properties of moments of random variables.
\newblock \url{https://jeanmariedufour.research.mcgill.ca/ResE/ECON763_2022W/Dufour_2008_C_Moments_W.pdf}, 2022.
\newblock [Accessed 19-01-2025].

\bibitem[Lin(2010)]{lin2010probability}
Zhengyan Lin.
\newblock \emph{Probability inequalities}.
\newblock Springer, 2010.

\bibitem[Boyd(2004)]{boyd2004convex}
Stephen Boyd.
\newblock Convex optimization.
\newblock \emph{Cambridge UP}, 2004.

\end{thebibliography}






\end{document}